\newcommand{\uG}[1]{B^{#1}}
\newcommand{\lpc}{lattice path counting}
\newcommand{\philr}[1]{\phi\langle #1 \rangle}
\newcommand{\stQ}{\widehat{Q}}
\newcommand{\stR}{\widehat{R}}
\newcommand{\stq}{\hat{q}}
\newcommand{\str}{\hat{r}}
\newcommand{\ie}{i.e.,~}
\newcommand{\LPC}{lattice path counting}
\newcommand{\eqqref}[1]{Eq.~(\ref{#1})}
\newcommand{\qbdp}{\ensuremath{\text{QDESA}^+}\xspace}
\newcommand{\qbdpp}{\ensuremath{\text{QDESA}^{++}}\xspace}
\newcommand{\ssp}{\ensuremath{\mathcal{X}}}
\newcounter{Alg}
\shorttitle{LPCA and QDESA} 
\begin{document}

\title{A Comparative Analysis  of  the Successive Lumping and the Lattice Path Counting Algorithms} 

\authorone[Rutgers University,  NJ, USA]{Michael N. Katehakis} 
\addressone{Department of Management Science and Information Systems,
Rutgers University,
 100 Rockafeller Road, Piscataway, NJ 08854, USA.
 E-mail: mnk@rutgers.edu} 
\vspace{-1cm}
 \authortwo[Leiden University, Netherlands]{Laurens C. Smit}
 \addresstwo{Mathematisch Instituut, Universiteit Leiden,
Niels Bohrweg 1, 2333 CA, The Netherlands,
 E-mail: lsmit@math.leidenuniv.nl}
\vspace{-1cm}
 \authorthree[Leiden University, Netherlands]{Floske M. Spieksma}
 \addressthree{Mathematisch Instituut, Universiteit Leiden,
Niels Bohrweg 1, 2333 CA, The Netherlands,
 E-mail: spieksma@math.leidenuniv.nl}

\begin{abstract}
{\small 
This article provides 
a comparison of the   successive lumping (SL) methodology developed in    \cite{sslqsf2015} with the  popular  \LPC\ \cite{mohanty1979lattice} in 
 obtaining  rate matrices for  queueing models, satisfying the  specific  quasi birth and death structure as in  \cite{leeuwaarden2009quasi}, \cite{van2006quasi}. The two methodologies are compared both in terms 
 of applicability requirements and numerical complexity
 by  analyzing  their  performance   for the same classical  queueing models considered in 
 \cite{leeuwaarden2009quasi}. 
 The main findings are: i) When both methods 
are applicable the SL based algorithms outperform the 
  \LPC \  algorithm (LPCA).  ii) There are important classes of problems (e.g., models with  (level) non-homogenous rates or with finite state spaces) for which the SL  methodology 
is applicable and for which the LPCA cannot be used. iii) Another   
  main advantage of successive lumping algorithms over \LPC\ is that the former includes
   a  method to compute  the steady state distribution using this rate matrix. 
  }
\end{abstract}

\keywords{steady state analysis; queueing; successive lumping} 

\ams{60K25}{68M20}  

\section{Introduction}

Two dimensional Markov chains arise as a natural way to model various real life applications.
In particular, many queueing models possess this structure and it is even possible that a more complex, higher dimensional queueing model can be decomposed into various two dimensional Markov processes. 
For various queueing models we refer to \cite{adan2009synchronized,adan2015shorter,adan2013erlang,bohm1997combinatorics, etessami2010quasi, 
Armin2003,Perros94,L-2011admission, vlasiou2014insensitivity,zhao1995queueing}.
Other areas in which these processes will arise outside queueing are for example  inventory models, cf. 
\cite{qr2012}, reliability, cf. \cite{dra1988,m1989} and 
 pricing models.  
In this paper we are particularly interested in 
a comparison of the new successive lumping (SL) methodology developed in  \cite{sslqsf2015} with the  popular  \LPC\ \cite{mohanty1979lattice} in 
 obtaining  rate matrices for  queueing models, as in  \cite{van2006quasi} and \cite{leeuwaarden2009quasi}. The two methodologies are compared both in terms 
 of applicability requirements and numerical complexity
 by  analyzing  their  performance   for the same classical  queueing models considered in 
 \cite{leeuwaarden2009quasi}.  
In all these models, the  objective is  to calculate the steady state distribution of a  pertinent   Quasi Birth-and-Death (QBD) process (i.e., a two dimensional Markov chain with a 
  transition generator  matrix $Q$ that contains nonzero rates  only for  transitions to the `left' and to the 
  `right' in every state) that describes the evolution of the state of the system in time.

The main  method that is  used to analyze QBD processes is based on  expressing the stationary probabilities  of states  of one  level in terms of those of its previous levels. This is done with the aid of a rate matrix $R$, which is 
the basis  of the matrix-geometric solution introduced by Neuts.
For general level-independent QBD processes, it is known that $R$ satisfies a matrix-quadratic equation. Algorithms for solving this equation were given in \cite{neuts80} and Latouche and Ramaswami \cite{latouche1999introduction}.
A current state of the art software implementing quadratically-convergent algorithms with a number of speed-up features is described in \cite{bmsv06}. A general algorithm for the level-independent case can be found in \cite{bright1995calculating} and a discussion of the Quasi Skip Free case in \cite{liu1996determination}.

There are various methods that make use of a special structure of the transition rate matrix $Q$, to provide 
efficient computation procedures for the rate matrix $R$. Such a procedure is available in the 
 the case in which the `down matrix' of $Q$,   is a product of a row and a column vector.  
 For other procedures that explicitly calculate a rate matrix we refer to \cite{van2011triangular} and \cite{mohanty1990discrete}. 
Recent studies, cf.~\cite{van2006quasi, leeuwaarden2009quasi}, have used \lpc\  methods to directly compute  the rate matrix for certain QBD processes that arise in queueing models.  For example, a priority queue model has been analyzed by this method, but also with other techniques, see e.g.~\cite{gillent1983semi} and references therein.
The idea of counting the number of paths on a lattice, cf.~\cite{mohanty1979lattice, flajolet2000formal},   has been used in many fields of applied probability, cf. \cite{spitzer2001principles}.

 A new  alternative method to  compute  the rate matrix for certain QBD processes can be  based on 
  the successive lumping (SL) procedure introduced in \cite{ssl2012}. 
It was  employed   in \cite{sslqsf2015} to obtain explicit solutions for `rate sets'  for large classes  of    QSF processes,  the so-called DES and RES processes.  The SL   approach   differs from  the previous mentioned works by its distinct method of derivation and its  applicability  to models with  infinite state spaces  and models that are   outside the QSF framework. 
However, it should be noted  that algorithms given in 
 \cite{gillent1983semi, latouche1999introduction, bright1995calculating}   can be used on other, more general (in terms of down-transitions) processes. The advantages of using SL are described in \cite{sslqsf2015}. 
Although the nature of a  path counting based method and the successive lumping based method 
   are very different, a comparison can be done, since they both rely on the absence of certain kind of transitions. 
Herein we compare the method introduced in \cite{leeuwaarden2009quasi} with the one based on successive lumping of \cite{sslqsf2015}.  

The main contribution of this paper is to  provide a clear comparison between  successive lumping (SL) based methods and the lattice path counting based algorithm, introduced in \cite{leeuwaarden2009quasi},   in computational complexity and applicability. First, it is shown that  the SL methodology yields  algorithms that are   faster than the counting algorithm.
Second, we show that SL based procedures are applicable to many  of the queueing models discussed in previous papers, and even to
  models   with finite state spaces  or   with   non-homogenous transition rate structures and to models with a quasi skip free (QSF) structure, cf.  \cite{sslqsf2015}.   However, there seem to exist  some artificial   queueing models  that do not possess  the SL property,   for which a lattice path counting algorithm is applicable.  Finally, this paper  continues the work of   \cite{sslqsf2015},  and it specializes its results  to homogenous QBD processes, in order to make the  comparison  of  successive lumping (SL) based  methods and the \lpc \  procedure possible. 

The paper has the following structure. In Section \ref{sec:def} we first  define the notation for the QBD processes that we will use throughout the paper.  In Section~\ref{sec:def} we  summarize the results of \cite{sslqsf2015} for the DES processes as they apply to quasi birth and death processes with a down entrance state and the resulting  
  {\sl  quasi birth and death down entrance state algorithm} (QDESA).
 In Section~\ref{sec:def} the QDESA procedure is specialized depending on the structure of the transition rate 
 $Q$, applicable  to the models under investigation in this paper.  Then, in Section~\ref{sec:examples} the introduced procedures are clarified by applying 
 them to two specific queueing examples.
 In Section~\ref{sec:lpc} we review the lattice path counting algorithm. In Section~\ref{sec:comp} we compare the  procedures  in speed (computational complexity). In   Section~\ref{sec:app} we discuss the  
  the type  of models for which each procedure can be applied. We
   conclude with some models that further   illustrate these comparisons. 

\section{Preliminary Results}\label{sec:def}
\subsection{Successive Lumping in Quasi Birth and Death Processes}\label{sec:sl}
In the sequel we consider an ergodic QBD process $X(t)$  with states in a finite or countable set $\ssp$. The 
states  (after re-labeling) will be written as tuples 
$(m,i)$, where in the state description the first entry  $m=0,1,\ldots,M$ represents the `level' of the state and
the second entry $i=0,1,2,\ldots,\ell_{m} $ represents the `stage' of the state $(m,i)$. 
 The integers  $\ell_{m}$ and $M$ are given constants  and they represent respectively  the number of stages ($\ell_{m}+1 $) and the highest level ($M $);  these scalars can be infinite. %
Let $Q$ denote the transition generator  matrix. 
The process $X(t)$ is  referred to as a `level QBD' process if  the only transitions  allowed are to a 
 state that is within the same  level  or to a level one step above or below, \ie$Q$
has the form:
\begin{equation}\label{eq:q}
Q=\left[\begin{array}{cccccc} W^{0}&U^0&0&\cdots& 0&0\\
D^1&W^1 &U^1 &\ddots& 0&0\\
0&D^2 &W^2 &\ddots& 0&0\\
\vdots&\ddots&\ddots&\ddots&\vdots&\vdots\\
0&0&0&\cdots&W^{M-1}& U^{M-1}\\
0&0&0&\cdots&D^M& W^M
\end{array}\right].
\end{equation}
The matrices $W$, $D$ and $U$ represent `within a level', `down one level' and `up one level' transitions respectively. The sub-matrices $W^{m}$
 above are of  dimension  $(\ell_{m}+1) \times (\ell_{m}+1)$, the sub-matrices $D^{m}$ are of dimension $(\ell_{m}+1)\times (\ell_{m-1}+1)$ and the submatrices $U^{m}$ are of dimension $(\ell_{m}+1)\times (\ell_{m+1}+1).$
 Further, we will use the notation $\mathcal{L}_n=\{(n,i),\ i=0,1,\ldots,\ell \}$  for the level sets ($n=0,1,\ldots,M$).


Let $\pi$ denote   the steady state distribution, \ie the solution of $\pi Q=0$  and $\pi1=1$. We denote by  $\pi^{n}$ the sub-vector of $\pi$ formed by the stationary probabilities of  the states of level $n$  \ie$\pi^{n}=[\,\pi(n,0),\ldots,\pi(n,\ell)\,]. $ 

In the context of the current paper we will
assume that every matrix $D^{m}$ has only one nonzero column (that for this section we will assume be the first column). The underlying QBD process is therefore successively lumpable (a DES process) with 
respect to the partition $\{\mathcal{L}_n\}_{n\ge0}$ of the state space $\ssp$,  cf.~\cite{ssl2012} for lumping and \cite{sslqsf2015} for a proof that $X(t)$ is lumpable with respect to this partition.
In addition we will assume that $\ell_{m}=\ell$  for all $m$ (i.e., the level size is independent of the level) and note that this condition is not   necessary for the DES procedure to be applicable, but is necessary for the LPC procedure, that will be discussed in Section~\ref{sec:lpc}. Below we will repeat the important definitions from \cite{sslqsf2015}, specialized for a QBD process.

In a QBD process we define the matrix $\widetilde{U}^m$  of size $(\ell+1)\times (\ell+1)$ as follows:
\begin{equation}\label{Eq:qbdU}
\widetilde{U}^m= U^m 1_{m}'\delta_m,
\end{equation}
where $1_{m}$ is a rowvector of size $\ell+1$ with identically equal to $1$ and $\delta_{m}$ is a vector of the same size identically equal to $0$ with a $1$ on its first entry.
Furthermore we define:
\begin{equation}\label{Eq:qbdB}
B^{m}=W^{m}+\widetilde{U}^{m}.
\end{equation}
For a  QBD process, we will call a matrix set $\{\mathcal{R}_{m}\}_{m}$ that satisfies the equation below  
\emph{a rate matrix set}.
\begin{equation}\label{Eq:qbdR}
\pi^{m}=\pi^{m-1}\mathcal{R}_m\, , \ \ \ \ \mbox{for $m=1,\ldots,M_2$}.
\end{equation}

In  \cite{sslqsf2015} it was shown that  the   matrix  $\uG{m}$ is invertible. A simplification of Theorem 2 of that paper 
for the special case of a QBD process implies that 
the matrix set  $\mathcal{R}_0:=\{R_{m}\}_{m}$  defined by:  
\begin{equation}\label{Eq:qbd}
 R_m=- U^{m-1} (B^m)^{-1},
\end{equation}
is a rate matrix set for  $Q$, when  $D^{m}$ has a single nonzero column.  
\begin{remark}\label{rem:latouche}\ \\
i) 
Note that Eq.~(\ref{Eq:qbdR}) and Eq.~(\ref{Eq:qbd}) imply  that the following   recursive  relation holds for all $\nu=0,\ldots,m-1$:
\begin{equation}\label{Eq:pim2}
\pi^{m}=\pi^{\nu}\prod_{k=\nu+1}^m R_{k}.
\end{equation} 

ii)
It is easy to see that the above defined $\pi^m$ and  $R_m$ satisfy the non-linear Eq.~(12.2) of 
\cite{latouche1999introduction}. 
The matrices $R_m$   are solutions to Eq.~(12.11)  of the same book, given there but 
without the explicit procedure of Eq.~(\ref{Eq:qbd})  to compute them.
\end{remark}

To obtain  the steady state distribution,  $\pi=[\pi^0,\pi^1,\ldots]$,  one only  needs to compute   
$\pi^{0}$, which per  Theorem 3 of \cite{sslqsf2015},  is given by Eqs.~(\ref{Eq:qbdresults}) - (\ref{Eq:Smqbd}) below.
 \begin{equation}\label{Eq:qbdresults}
\pi^{0}=\delta_{0}\left[S_{0}^{M_2}\delta_{0}-B^{0} \,\right]^{-1},
\end{equation}
where
\begin{equation}\label{Eq:Smqbd}
S_{0}^{M_2}= 1'_{0}+\sum_{m=1}^{M_2}\prod_{k=1}^m R_{k}1'_{m} .
\end{equation}

The procedure to calculate the steady state distribution $\pi$ when there is a down entrance state in every level that 
 is based on Eqs. (\ref{Eq:qbdresults}),  (\ref{Eq:Smqbd}) and (\ref{Eq:qbd})  above    will be referred to in the sequel as the {\sl 
 quasi birth and death down entrance state algorithm} (QDESA).

\subsection{Solution Procedures for Specific QBD processes}\label{sec:qdesa}
Unless otherwise stated in   the remainder of the paper we will  consider homogenous level processes.  Note that for these processes $B^m=B=W+\widetilde{U}$ (defined in Eq.~(\ref{Eq:qbdB})) for all $m$. 
Depending on the structure of  the  matrix $B$  we  define  two subclasses,   of decreasing generality, of the 
QDESA  procedure.
First, we identify homogenous QBD processes with a down entrance state where the matrix $B$ is of countable dimension and has the following form:
\begin{equation}\label{eq:b}
B= \left[\begin{array}{rccccc}
-b^d_0-b^u_0&b^u_0&0&0&0&\cdots\\ 
b^d_1+b^z_1&-b^w_1&b^u_1&0&0&\cdots\\ 
b^z_2&b^d_2&-b^w_2&b^u_2&0&\ddots\\
b^z_3&0&b^d_3&-b^w_3&b^u_3&\ddots\\
b^z_4&0&0& b^d_4&-b^w_4&\ddots\\
\vdots&\vdots&\ddots&\ddots&\ddots&\ddots
\end{array}\right],
\end{equation}
where
\begin{equation*}
b_{i}^{w}=b_{i}^{z}+b_{i}^{d}+b_{i}^{u},
\end{equation*}
and these elements $b_{i}^{a}$ are nonzero for $a\in\{w,z,d,u\}$. The procedure to find the steady state distribution of these processes will be referred to as \qbdp.

Second, we consider homogenous QBD processes with a down entrance state where the matrix  $B$ has the structure of Eq.~(\ref{eq:b}) and is \emph{element homogenous} i.e.,  
$$b_{i}^{a}=b^{a} \mbox{\ \ for all $i=0,1,\ldots$ and $a\in\{z,d,w,u\}.$}$$
In this case the procedure to find the steady state distribution $\pi$  will be named \qbdpp. 

In \cite{Katehakis2014inverse}  we present a fast $\mathcal{O}(\ell^{2})$ algorithm to compute the inverse of matrix $B$ of Eq.~(\ref{eq:b}),   when it is element homogenous, and thus used in  \qbdpp. In that  same paper we described a  procedure with the same complexity to compute the inverse of $B,$ when it  has the structure of Eq.~(\ref{eq:b})  and it  is not required to be element homogenous. An alternative method of computation with the same complexity is given in \cite{heinig1984algebraic}, pp. 62, but only if $\ell<\infty$ and $B$ is element homogenous.

\begin{remark}\label{rem:strucW}
One can determine which solution method is applicable by inspection of the matrix $Q$. If $W^m$ has a birth and death structure, \qbdp is applicable, and when both $W$ and $\widetilde{U}$ have a homogenous  birth and death structure, \qbdpp is applicable. 
\end{remark}

When $W$ has another structure than the one described above, it might still have a sparse form. In that case it might be beneficial to use other fast matrix inversion algorithms, like in  \cite{hager1989updating} and \cite{woodbury1950inverting}.

In the rest of this paper references to QDESA include the special cases \qbdp and \qbdpp\ as well 
and  it is assumed that the most efficient form  QDESA is always applied.

\section{Applications: Classic Queueing Models}\label{sec:examples}

In this section we will discuss two classical queueing models and analyze how the procedures above can be used to compute the steady state distribution. The Priority Queue will be discussed in detail,  and the  Longest Queue    more briefly. To avoid confusion we will use when necessary the notation $A^P$ and $A^L$ to distinguish a matrix $A$ associated with the priority model  of Section~\ref{sec:priority}, or  
the longest queue  of Section~\ref{sec:longest}, respectively. 
%
%
%
\subsection{The Priority Queue}\label{sec:priority}
In the priority queue model customers arrive according to two independent Poisson processes  with rate $\lambda_i$ for   queue $i$, $i=1,2$. There is a single  server that serves at exponential  rate $\mu,$ independently of the arrival processes. The server serves customers at   queue 2 only when queue 1  is empty, preemptions are allowed and server switches are instantaneous. 
Under these assumptions the  state of the system can be summarized by a tuple $(n, j)$ where  $n$  (respectively $j$)  is the number of customers in queue 2 (respectively  in queue 1). 

It is easy to see that $Q$ is the transition rate matrix of  a DES process, in fact a homogenous level QBD  process with $M=\infty;$  the level sets $\mathcal{L}_n$ and their entrance states $(n,0)$  are illustrated in Figure \ref{fig:pq}.

\begin{figure}[ht!]
  \centering
\includegraphics[clip=true, trim=90 500 90 100,scale=0.65]{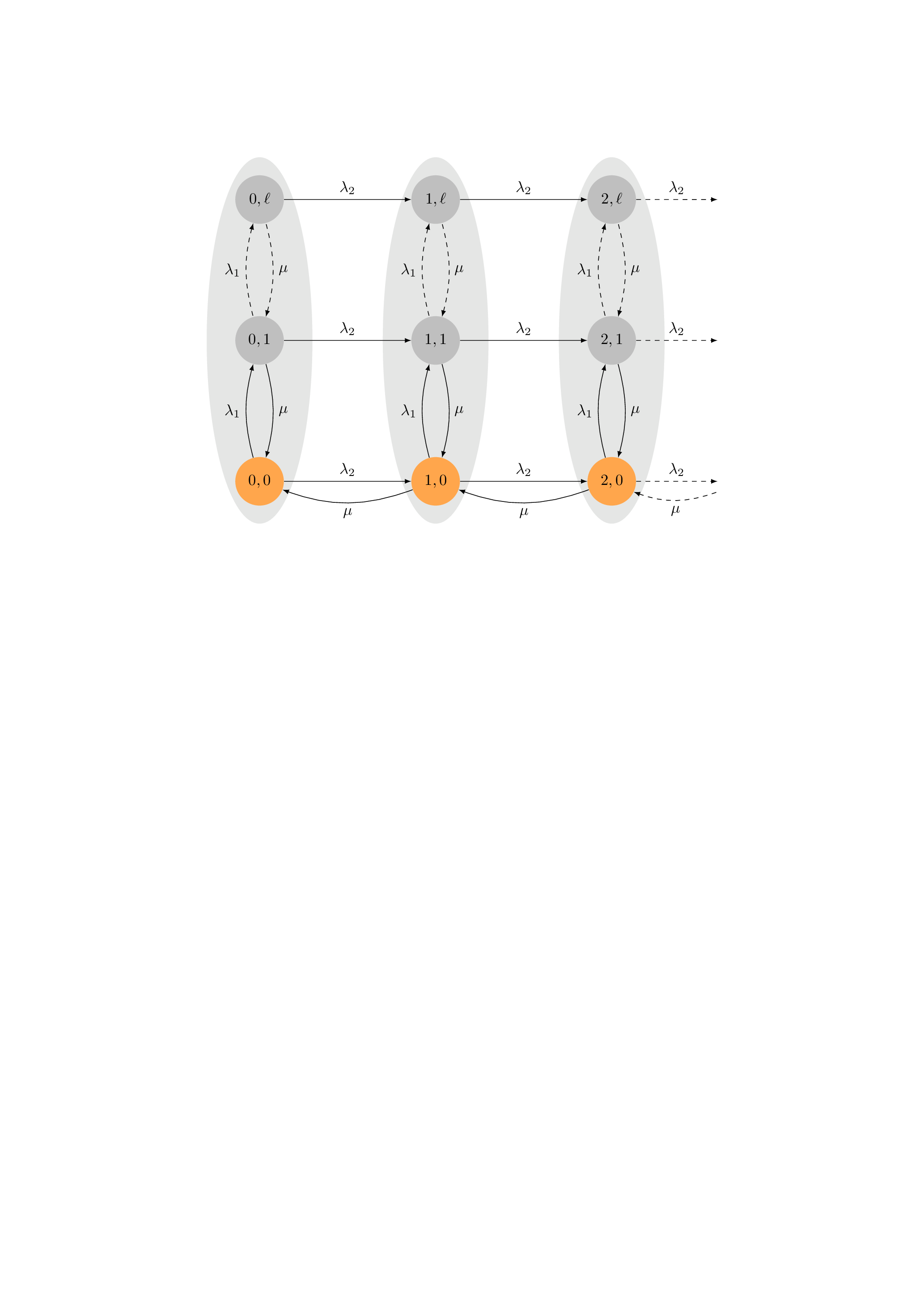}
 \caption{Transition diagram of the priority queue model.}\label{fig:pq}	
\end{figure}

Since there is no maximum for the number of customers in  queue 1 the sub-matrices $D$, $W$ and $U$ have infinite dimension ($\ell=\infty$) and the   representation below, where 
$d=(\lambda_1+\lambda_2+\mu)$.
Note that $W_{0}$ is obtained from $W$ by replacing $d$ in its $(0,0)$ position by $(\lambda_{1}+\lambda_{2})$, since in state $(0, 0)$ there are no customers in service.

\begin{equation*}
D=\left[\begin{array}{ccc} \mu&0&\cdots\\0&0&\cdots\\\vdots&\vdots&\ddots\end{array}\right], 
U=U^0\!\!=\!\!\left[\begin{array}{cccc} \lambda_2&0&0&\cdots\\0&\lambda_2&0&\ddots\\0&0&\lambda_2&\ddots\\\vdots&\ddots&\ddots&\ddots\end{array}\right],
W=\left[\begin{array}{cccc} -d&\lambda_1&0&\cdots\\ \mu&-d&\lambda_1&\ddots\\ 0&\mu&-d&\ddots\\\vdots&\ddots&\ddots&\ddots\end{array}\right],
\end{equation*}

Note that in this model we have:  $U^0=U=\lambda_2 I,$ thus,
$R^{P}=R_1^{P}:= -\lambda_2 B^{-1},$ where 
$$
B^{P}=\left[\begin{array}{ccccc} -(\lambda_1+\mu)&\lambda_1&0&0&\cdots\\ 
\lambda_2+\mu&-d&\lambda_1&0&\cdots\\ \lambda_2&\mu&-d&\lambda_1&\ddots\\\lambda_2&0&\mu&-d&\ddots\\
\vdots&\vdots&\ddots&\ddots&\ddots\end{array}\right] .
$$

It is clear that matrix $B^{P}$ has the required structure to use the \qbdpp .   Thus, the priority  queue model can be solved easily using this method.

\subsection{Longest Queue}\label{sec:longest}

In a longest queue model,  cf.~\cite{zheng1990queueing},    two types of  customers arrive according to
 independent Poisson   streams, each with rate $\lambda$
and form two queues according to their type.  
There is  a single   exponential server with rate $\mu> 2\lambda$ that severs  customers  from the longest queue (\ie the one having the most customers),  where ties are resolved with equal probabilities for each queue; server queue switches are instantaneous. 

To obtain meaningful results for this model, we will use
the following state space description that is easy to work with. At each point of time let the 
state be specified by a tuple  $(n,j)$, where $j$ denotes the difference 
between the two queue lengths  and $n$ denotes  the  length of the shortest queue. A more natural state space  description is discussed in Section~\ref{sec:longest2}.

It is easy to deduce that this  is a DES process, in fact a homogenous level QBD  process,  with $M=\infty$  with level sets $\mathcal{L}_n$  as described in Section \ref{sec:def} and entrance states $(n,1)$ for level $n$ where matrices   $D$, $U$, $W$ as given below, $d=2\lambda +\mu .$
We note that $W_{0}$ is obtained from $W$ by replacing $d$ in its $(0,0)$ position by $(\lambda_{1}+\lambda_{2})$, since in state $(0, 0)$ there are no customers in service.

\begin{equation*}
D=\left[\begin{array}{cccc} 0&\mu&0&\cdots\\0&0&0&\cdots\\0&0&0&\cdots\\\vdots&\vdots&\vdots&\ddots\end{array}\right],
U=\left[\begin{array}{cccc} 0&0&0&\cdots\\\lambda&0&0&\ddots\\0&\lambda&0&\ddots\\\vdots&\ddots&\ddots&\ddots\end{array}\right],
W=\left[\begin{array}{cccc} -d&2\lambda&0&\cdots\\ \mu&-d&\lambda&\ddots\\ 0&\mu&-d&\ddots\\\vdots&\ddots&\ddots&\ddots\end{array}\right].
\end{equation*}

Since $U^0=U$, the rate matrices $R_1$ and $R$ for this model are equal, \ie$R_{1}^{L} =R^{L}$, as in the previous  models and the matrix $B$ in this model has the following form:

$$
B^L=\left[\begin{array}{ccccc} -d&2\lambda&0&0&\cdots\\ 
\mu&-(\mu+\lambda)&\lambda&0&\cdots\\ 0&\mu+\lambda&-d&\lambda&\ddots\\0&\lambda&\mu&-d&\ddots\\0&\lambda&0&\mu&\ddots\\
\vdots&\vdots&\ddots&\ddots&\ddots\end{array}\right] .
$$

Note that the matrix $B^{L}$ has a structure similar (but not identical) to that of $B$ defined in Eq.~(\ref{eq:b}); its structure from the second column on  
is  identical to that of $B$, but an extra column has been added in front. This can be easily resolved with a suitable modification of \qbdpp. 

\begin{remark}
The Feedback queue, the third model that is discussed in \cite{leeuwaarden2009quasi}, fits the QDESA framework as well; its analysis goes analogous to the analysis of the priority queue.
\end{remark}
 
\section{Lattice Path Counting}\label{sec:lpc} 
A dif{}ferent  approach to 
compute     the steady state distribution $\pi$ for a class of Markov process that includes the queueing 
models described before,  is the  \emph{Lattice Path Counting   Algorithm} (LPCA)    of   \cite{van2006quasi}, see also \cite{leeuwaarden2009quasi}. In this section we will repeat LPCA in the notation used in this paper.

Throughout this paper we use a labeling of states that is  
 consistent with  our notation introduced  in  \cite{ssl2012} and \cite{sslqsf2015}. 
 In \cite{leeuwaarden2009quasi} a similar tuple notation was used, but the meaning of the first and the second element is reversed.
   For example, in the priority queue model of Section~3.1 
 we denote a system with two queues with $n$ customers in queue 2 and $i$ in queue 1 as 
 $(n,i)$.  This same $(n,i)$ in \cite{leeuwaarden2009quasi} denoted
  a system with two queues with $n$ customers in queue 1 and $i$ customers in queue 2. 

Recall that we   used the \emph{level} (first coordinate) sets 
$\mathcal{L}_n=\{(n,i),\ i=1,\ldots,\ell \}$ where $n=0,1,\ldots$ to define a partition with respect to which the studied processes are `level QBD' processes.
A `stage QBD' process can be defined analogously;
one can  rearrange the states of $\ssp$ in the order of  stages (second coordinate), 
\ie as $(0,1),\ldots, (M,1),$
 $ (0,2),\ldots (M,2),$ $\ldots,$ $ (0,\ell),\ldots ,$ $ (M,\ell).$ In this case we define the stage sets to be: $\mathcal{K}_i =\{(n,i),\ n=0,1,\ldots  \}.$ Transitions are allowed one stage up and one stage down to preserve the QBD property in the direction of stages.
Using a  stage partition,  
we obtain  the following representation of the transition generator matrix, which will be denoted by $\stQ$ to indicate that  a stage   partition is  used:
$$
\stQ = \left[\begin{array}{cccc} B_{1}&B_{0}&0&\cdots\\A_{2}&A_{1}&A_{0}&\ddots\\0&A_{2}&A_{1}&\ddots\\\vdots&\ddots&\ddots&\ddots\end{array}\right],
$$
where the dimension of the above sub matrices  is $M\times M$.

The matrix  $\stQ$  in the current paper is the same  as the matrix $Q$ of \cite{leeuwaarden2009quasi}, subject to appropriate 
  relabeling of states, as is mentioned above.  Note that in this paper 
  the notation $M$ is used for our $\ell$ above and their
corresponding $\ell$ is infinite.  

Following  the approach  introduced in \cite{leeuwaarden2009quasi}, a process $X(t)$  is called Lattice Path Countable (LPC) if the following three conditions hold:
\begin{itemize}
\item[i)] When $j>1$,  the only   transitions  allowed from state $(n,j)$ are   to states: $(n+e_1,j+e_2)\in \ssp$ where $e_1\in\{0,1\}$ and $e_2\in\{-1,0,1\}$;
\item[ii)]  When $j>1$, the transition rate $\stQ((n,j)\, ,\, (n+e_1,j+e_2) )$ is a function of the jump size and direction only, \ie
\begin{equation}\label{eq:lpcah}
 \stQ((n,j)\, ,\, (n+e_1,j+e_2) ) =\stq(e_1,e_2);
\end{equation}
 
\item[iii)]  The process is a stage QBD process where $\ell$ is infinite and $M$ is  finite or  infinite. 
\end{itemize}

In the previous section we described a rate matrix $R$ that provides a relationship between the steady state distributions of the different levels. A similar recursion can be defined for the steady state vectors $\pi_{i}$ for stage $i>0$: $\pi_{i+1}=\pi_{i}\stR,$

where $\stR$ is the minimal nonnegative solution to the matrix quadratic equation:
$A_0+\stR A_1+\stR^{2}A_2=0.$
 
We have denoted the rate matrix constructed with LPC as $\stR$ to distinguish it from the matrix $R$  used in \eqqref{Eq:qbd} above.  

Figure \ref{fig:levelstages1} displays a simplification of a transition diagram of a process that is a QBD process with respect both to the levels and to the stages. The LPCA can be applied with respect to the stages.

\begin{figure}[ht!]		
  \centering
\includegraphics[clip=true, trim=90 500 90 100,scale=0.67]{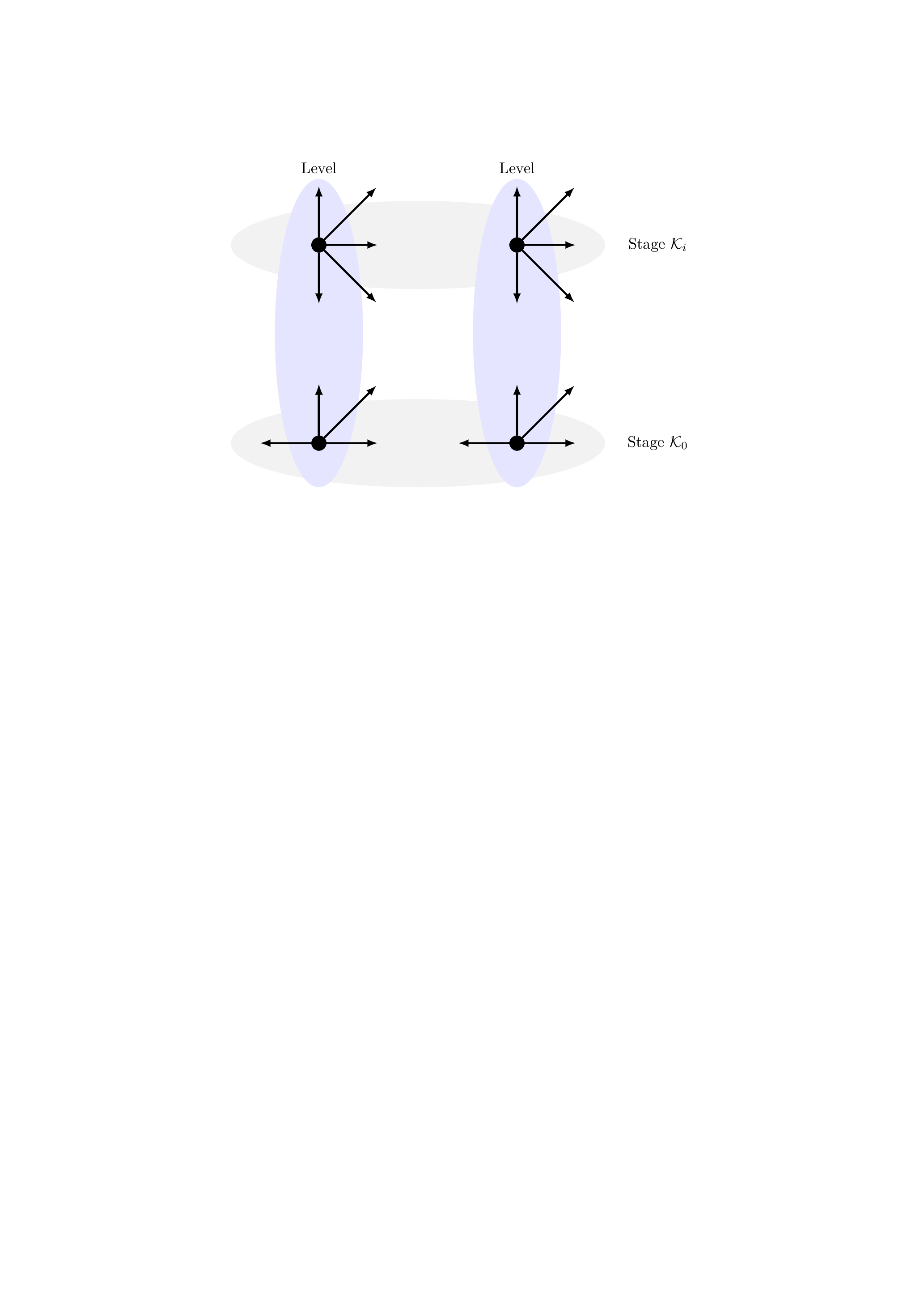}
 \caption{Levels and Stages.}\label{fig:levelstages1}
\end{figure}

Further, it is  known, cf.~for example \cite{latouche1999introduction}, that 
the elements   $\str(n| m)$    of the matrix  $\stR=[\str(n| m)] $ represent 
 the expected taboo sojourn time in $(n , i+ 1)$  before the first return to stage $i$   given that the process starts in $(m, i)$ multiplied by  the sojourn time in stage $i$, for any $i\ge 1$.
Since the LPC assumption above does not allow transitions in the  downward direction and has a homogenous structure by point ii) above, the rate matrix is  upper-triangular and
has the following form:
$$
\stR=\left[\begin{array}{cccc} \str_{0}&\str_{1}&\str_{2}&\cdots\\0&\str_{0}&\str_{1}&\cdots\\0&0&\str_{0}&\cdots\\ \vdots&\vdots&\ddots&\ddots\end{array}\right].
$$
 Theorem \ref{theo:lpc} below 
  provides  an explicit  expression for the elements 
  of   $\stR$.  It is the main result of  \cite{leeuwaarden2009quasi}  and uses the following expressions:
%
%
 \begin{align}
P_{h}(s,u,m)&=\phi\langle1,-1\rangle^{s}\phi\langle1,0\rangle^{t}\phi\langle1,1                                                                                                                                                                                                                                                                                                                                                                                                                                                                                                                                                                                                                                                                                                      \rangle^{u}\phi\langle0,1\rangle^{m-u}\phi\langle0,-1\rangle^{m+1-s} \nonumber \\
L_{h}(s,u,m)&=\frac{1}{m+1}\binom{2m}{m}\binom{m+1}{s}\binom{m}{u}\binom{2m+t}{t} \nonumber \\
\label{eq:G} 
G_{h}&=\sum_{s=0}^{h}\sum_{u=0}^{h-s}\sum_{m=\max(u,s-1)}^{\infty}L_{h}(s,u,m)P_{h}(s,u,m)\\
\kappa_{h}&=\frac{\philr{1,0}\kappa_{h-1}+\philr{0,1}\sum_{j=0}^{h-1}G_{h-j}\kappa_{j}+\philr{1,1}\sum_{j=0}^{h-1}G_{h-j-1}\kappa_{j}}{1-\philr{0,1} G_{0}}, \nonumber 
\end{align}
where $\rho_{0}=1$ and $\rho_{-1}=0 $ and $\phi(e_1,e_2)$  denotes the    transition  probability
from state $(n, j)$ to state $(n + e_1, j + e_2).$

\begin{theorem}\label{theo:lpc} The upper diagonal elements $\str_{h}$ of $\stR$ can be expressed as follows:
\begin{equation}\label{eq:lpc}
\str_{h}=2\, \frac{\philr{0,1}\kappa_{h}+\philr{1,1} \kappa_{h-1}}{1+\sqrt{1-4\philr{0,1} \philr{0,-1}}}.
\end{equation}
\end{theorem}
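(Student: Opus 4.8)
The plan is to prove the formula probabilistically, by identifying $\str_h$ with the taboo Green's-function quantity recalled just above the theorem and then evaluating that quantity through a lattice path count; this simultaneously settles minimality, since a purely probabilistic construction of a nonnegative solution of $A_0+\stR A_1+\stR^2A_2=0$ automatically yields the minimal one. Concretely, $\str_h$ is (the sojourn-time-weighted version of) the expected number of visits to $(m+h,i+1)$ before the process returns to stage $i$, starting from $(m,i)$. First I would decompose such an excursion into three pieces: (a) the first step out of stage $i$, which by condition~(i) lands either in $(m,i+1)$ (weight $\philr{0,1}$) or in $(m+1,i+1)$ (weight $\philr{1,1}$); (b) the portion of the excursion spent at level $m+h$ in stage $i+1$, collecting the visits we are counting; and (c) the surrounding motion in stages $\ge i+1$. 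This is exactly the structure reflected in the numerator $\philr{0,1}\kappa_{h}+\philr{1,1}\kappa_{h-1}$ of \eqqref{eq:lpc}, so the bulk of the work is to show that $\kappa_h$ equals the horizontally displaced contribution of parts (b)--(c) started from a state in stage $i+1$.

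Next I would establish the recursion for $\kappa_h$ by a renewal/first-step argument at the successive visits to stage $i+1$. From a state in stage $i+1$ the process either takes a horizontal step of weight $\philr{1,0}$ within stage $i+1$, or makes an excursion into stage $i+2$ and returns to stage $i+1$; by the homogeneity assumption~(ii) such a return first passage lands $j$ levels higher with the weight $G_{h-j}$ appearing in \eqqref{eq:G} (after the initial up-step, either $\philr{0,1}$ with the return starting at the same level, or $\philr{1,1}$ with it starting one level higher, which produces the two sums $\sum_{j}G_{h-j}\kappa_{j}$ and $\sum_{j}G_{h-j-1}\kappa_{j}$). The excursions that return to \emph{exactly} the same state --- the $\philr{0,1}G_0$ contribution --- form a geometric sequence, and resumming it produces the denominator $1-\philr{0,1}G_0$. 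Matching terms yields precisely the stated recursion for $\kappa_h$.

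The combinatorial heart of the proof is the closed form \eqqref{eq:G} for $G_h$. A first passage from stage $i+1$ down to stage $i$ that raises the level by exactly $h$ uses, for some $m$, exactly $m$ steps with $e_2=+1$ and $m+1$ steps with $e_2=-1$, where among all steps there are $s$ of type $\langle1,-1\rangle$, $u$ of type $\langle1,1\rangle$ and $t:=h-s-u$ of type $\langle1,0\rangle$; moreover the path must stay weakly in stages $\ge i+1$ until its final step. I would count these paths by first placing the $m$ up-stage and $m+1$ down-stage steps so as to respect the boundary --- a ballot/cycle-lemma count contributing the Catalan factor $\frac{1}{m+1}\binom{2m}{m}$ --- and then distributing the diagonal and horizontal steps among the available slots, which contributes $\binom{m+1}{s}\binom{m}{u}\binom{2m+t}{t}$; this gives $L_h(s,u,m)$, and weighting each such path by the product of its step probabilities gives $P_h(s,u,m)$. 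Summing over the admissible ranges (which force $s\le m+1$, $u\le m$, hence $m\ge\max(u,s-1)$) produces \eqqref{eq:G}. I expect this step --- pinning down the boundary constraint so the Catalan number emerges, and checking that the index ranges agree with the combinatorial restrictions --- to be the main obstacle; the remainder is bookkeeping.

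Finally I would assemble the pieces. In part (b), once at level $m+h$ in stage $i+1$ the process may repeatedly go up to stage $i+2$ and return to the same state before moving on; by a self-similarity argument the probability of such a round trip satisfies a quadratic whose relevant root is $\tfrac12\bigl(1-\sqrt{1-4\philr{0,1}\philr{0,-1}}\,\bigr)$, so the expected number of visits per passage is the Catalan generating function $2/\bigl(1+\sqrt{1-4\philr{0,1}\philr{0,-1}}\,\bigr)$ --- the scalar multiplying the numerator in \eqqref{eq:lpc}. Multiplying this factor by the displacement-weighted excursion contribution $\philr{0,1}\kappa_{h}+\philr{1,1}\kappa_{h-1}$ coming from the first step out of stage $i$ gives exactly \eqqref{eq:lpc}, and since $\stR$ was characterized as the minimal nonnegative solution of the matrix-quadratic equation, the probabilistic identification completes the proof.
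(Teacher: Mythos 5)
You should know at the outset that the paper you are working from does not prove Theorem~\ref{theo:lpc} at all: it states it as ``the main result of \cite{leeuwaarden2009quasi}'' and only records, around the statement, the taboo--sojourn-time interpretation of the entries $\str(n|m)$ and the ingredients $G_h$, $\kappa_h$, $\philr{e_1,e_2}$ needed to write \eqqref{eq:lpc} down. So there is no in-paper proof to compare you with; what you have produced is an outline of the derivation one would expect in the cited source, and structurally it is the right one: a first-step decomposition out of stage $i$ giving the numerator $\philr{0,1}\kappa_h+\philr{1,1}\kappa_{h-1}$, a renewal recursion for $\kappa_h$ through first-passage weights $G_h$ with the geometric resummation producing the denominator $1-\philr{0,1}G_0$, a ballot/cycle-lemma count behind $G_h$, and the Catalan generating function $2/\bigl(1+\sqrt{1-4\philr{0,1}\philr{0,-1}}\bigr)$ as the expected number of repeat visits to the same state.

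As a proof, however, the decisive steps are still missing. First, the combinatorial core --- that the number of admissible first-passage paths with $s$ steps $\langle1,-1\rangle$, $u$ steps $\langle1,1\rangle$, $t=h-s-u$ steps $\langle1,0\rangle$, $m$ steps $\langle0,1\rangle$ and $m+1$ steps $\langle0,-1\rangle$ is exactly $L_h(s,u,m)=\tfrac{1}{m+1}\binom{2m}{m}\binom{m+1}{s}\binom{m}{u}\binom{2m+t}{t}$ --- is precisely the point you defer as ``the main obstacle''; without verifying that the boundary constraint bears only on the vertical sub-walk, that the diagonal and horizontal steps can be interleaved in the asserted numbers of slots, and that the ranges $s\le m+1$, $u\le m$ (hence $m\ge\max(u,s-1)$) exhaust all cases, the theorem is not established. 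Second, you pass silently between the continuous-time quantity $\str(n|m)$ (an expected taboo sojourn time scaled by the sojourn time in stage $i$) and path counts weighted by the jump probabilities $\philr{e_1,e_2}$; a uniformization/normalization argument is needed to justify that translation and it is absent from the sketch. Third, the claim that ``a purely probabilistic construction of a nonnegative solution automatically yields the minimal one'' is not correct as stated: arbitrary nonnegative solutions of $A_0+\stR A_1+\stR^2A_2=0$ need not be minimal; what you should invoke is the standard identification (e.g.\ in \cite{latouche1999introduction}) of the matrix of expected taboo sojourn times with the minimal nonnegative solution, and then show your counted quantity equals that matrix. With these three points filled in, your route is the natural one and agrees with the structure of \eqqref{eq:G}--\eqqref{eq:lpc}; as written it is a plan rather than a proof.
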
 

The LPCA is based on the calculation of \eqqref{eq:lpc}, utilizing a new  computation of the $G_h$ in Eq.~(\ref{eq:G}) above using hypergeometric functions, cf. Eq.~(26) and (27) of  \cite{leeuwaarden2009quasi}.

\section{Comparative Analysis}\label{sec:comp}
In this section we will compare the efficiency of LPCA and QDESA described in the previous section. To make a fair comparison between these algorithms we will compare their complexities  in Section \ref{sec:complexity} for transition rate matrices on which they 
can \emph{both} be applied. In Section~\ref{sec:app} we discuss classes of models for which a version of QDESA is applicable while LPCA is not. We will also distinguish structures for which the LPCA can be used efficiently, but for which QDESA is not readily applicable. 

It is important to note that LPCA is based on the existence of  a  `homogeneous portion'  of stages,  \ie transition rates are both stage and level independent, as is described in Section 5 of \cite{leeuwaarden2009quasi} and summarized in the previous section.
The non-homogeneous part of the state space is considered to be (part of) stage  $\mathcal{K}_0$. This non-homogeneous part may induce that QDESA might not be applicable; the entrance state property might be violated. Exit states might still be present, for the formal definition of an exit state we refer to \cite{Katehakis2014LevelProductForm}. In this paper we have described how an entrance state and an exit state are related and how the choice of levels can be adjusted to transform an exit state into an entrance state. However, no  applications are known for which such a complex structure in $\mathcal{K}_{0}$ is necessary, that QDESA is no longer applicable.  

When a process has such a structure that  QDESA applies (with respect to the levels) \emph{and} LPCA (with respect to the stages) we note that $B,$ (where  $R=UB^{-1}$) has to have the structure of Eq.~(\ref{eq:b}), up to a permutation of the columns, due to the fact that the process is a QBD process in the stage direction, see Remark~\ref{rem:strucW}.
Furthermore, it is easy to see that  this homogeneous structured process implies that  matrix $B$  has an element homogenous structure, since the elements are independent on the stages.
Summarizing the above, we state the following.
\begin{proposition}
Suppose that the following are both true:
\begin{itemize}
\item[-] LPCA is applicable to a QBD process with respect to the stages,
\item[-] The set $\bigcup_{k=0}^{n} \mathcal{L}_{k}$ has an entrance state or the set $\bigcup_{k=n}^{M} \mathcal{L}_{k}$ has an exit state.
\end{itemize}
Then \qbdpp\ can be applied with respect to the level partition.

\end{proposition}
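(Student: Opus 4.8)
The plan is to verify the two ingredients that \qbdpp\ requires: first, that the process is successively lumpable with respect to the level partition, so that QDESA is available in the form $R_m=-U^{m-1}(B^m)^{-1}$ of \eqqref{Eq:qbd}; and second, that the matrix $B$ so obtained has the structure of \eqqref{eq:b} and is element homogeneous. The second ingredient is essentially the content of the paragraph preceding this proposition, so the real work is to see that the stated hypotheses supply both, together with the standing level-homogeneity assumption.

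For the first ingredient, note that applicability of LPCA forces the homogeneous portion of the state space to have transition rates that are both stage and level independent; read in the level representation this means $W^m=W$, $U^m=U$, $D^m=D$ (apart from the boundary level $0$), so the process is a level-homogeneous QBD process as assumed throughout Section~\ref{sec:qdesa}. The second bullet of the hypothesis provides a down entrance state for each initial segment $\bigcup_{k\le n}\mathcal{L}_k$ of levels --- either directly, when $\bigcup_{k\le n}\mathcal{L}_k$ has an entrance state, or, when $\bigcup_{k\ge n}\mathcal{L}_k$ has an exit state, after the relabeling of levels described earlier that turns an exit state into an entrance state. Hence each $D^m$ has a single nonzero column, the process is a DES process with respect to $\{\mathcal{L}_n\}$, and by Theorems~2 and~3 of \cite{sslqsf2015}, specialized to QBD processes in Section~\ref{sec:sl}, QDESA applies with $B^m=B=W+\widetilde{U}$.

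For the second ingredient, since LPCA is applicable with respect to the stages the process is a stage QBD process (condition iii) of the LPC definition): within a level the only transitions are between neighbouring stages, except possibly for the non-homogeneous stage $\mathcal{K}_0$, and by condition i) the up-level block $U$ respects the same stage-adjacency. Thus $W$ is tridiagonal in the stage index away from the entrance column, while $\widetilde{U}$, which by \eqqref{Eq:qbdU} concentrates the row sums of $U$ into the entrance-state column, contributes exactly one additional column; consequently $B=W+\widetilde{U}$ has the birth-and-death-plus-one-column pattern of \eqqref{eq:b} up to moving the entrance column to the front --- the minor modification already noted for $B^{L}$ in Section~\ref{sec:longest} --- cf.\ Remark~\ref{rem:strucW}. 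Element homogeneity then follows from condition ii) of the LPC definition, by which the homogeneous stage rates $\stq(e_1,e_2)$ depend only on the jump direction: each diagonal of $W$ and of $U$ is constant, so $b^a_i=b^a$ for $a\in\{z,d,w,u\}$, and the relation $b^w_i=b^z_i+b^d_i+b^u_i$ is just the statement that the row sums of $B$ agree with those of $W+U=-D$, which vanish off the entrance column. Combining the two steps, $B$ is element homogeneous with the structure of \eqqref{eq:b}, so \qbdpp\ applies.

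The step I expect to be the main obstacle is the boundary bookkeeping: one must check that the non-homogeneous stage $\mathcal{K}_0$ and the boundary level do not destroy either the single-nonzero-column property of the $D^m$ or the tridiagonal-plus-one-column pattern of $B$ --- which is precisely why the entrance/exit-state property has to be imposed as an explicit hypothesis rather than derived from LPC applicability alone --- and that the permutation (or the $B^{L}$-type modification) bringing the entrance column to the front keeps the $\mathcal{O}(\ell^{2})$ inversion of \cite{Katehakis2014inverse} valid.
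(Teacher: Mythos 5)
Your proposal is correct and follows essentially the same route as the paper, which justifies the proposition only informally in the paragraph immediately preceding it: the entrance/exit-state hypothesis (with the exit-to-entrance relabeling) gives the DES property needed for QDESA on the level partition, the stage-QBD structure forces $B$ to have the form of Eq.~(\ref{eq:b}) up to a column permutation (cf.\ Remark~\ref{rem:strucW}), and the stage/level independence of the LPC rates gives element homogeneity. Your write-up simply makes these three steps explicit (with a harmless slip in writing $W+U=-D$, which should be read as an identity of row sums), so it matches the paper's argument.
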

A result of this proposition is that for a fair computational comparison between the algorithms it suffices to  compare LPCA with \qbdpp.

\subsection{Computational Complexity of the Procedures}\label{sec:complexity}
By  Eq.~(\ref{Eq:qbd}) we know that the computational complexity of \qbdpp is determined by the 
complexity of calculating the elements of the matrix $R$ with dimension $\ell\times \ell$. Since $U$ is a sparse matrix in this case, the computationally heavy step is to invert matrix $B$.
For LPCA  the computational complexity is determined by the 
complexity of calculating the elements of matrix $\stR$. Recall that $\stR$ has dimension $M\times M$.

The  general result on complexity is 
summarized in Theorem \ref{th:cplx} below. To compare the complexities of QDESA to that of LPCA, we take $\ell =M$, e.g.~this is the case in the priority queue model 
when the queues have the same (finite or truncated) capacity. In the following complexity analysis we assume that arithmetic operations with individual elements have complexity $\mathcal{O}(1).$

\begin{theorem}\label{th:cplx}  When the steady state distribution of a QBD process can be found both by using LPCA and using QDESA the following are true:

i) Using LPCA, the computation of the stage-rate matrix $\stR$ has complexity $\mathcal{O}(M^{4}).$

ii) Using \qbdpp, the computation of the level-rate matrix $R$ has complexity $\mathcal{O}(\ell^{2}).$
\end{theorem}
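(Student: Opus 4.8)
The plan is to establish the two complexity bounds separately, since they stem from entirely different computational mechanisms.

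For part (ii), the argument is short. By Eq.~(\ref{Eq:qbd}) we have $R = -U B^{-1}$ with $U = \lambda_2 I$ (more generally a sparse, e.g.\ diagonal, matrix in the homogeneous-level setting under consideration), so forming $R$ from $B^{-1}$ costs only $\mathcal{O}(\ell^2)$ entry operations — indeed $\mathcal{O}(\ell)$ when $U$ is a multiple of the identity. Hence the cost is dominated by inverting $B$. Since \qbdpp\ requires $B$ to be element homogenous of the banded form in Eq.~(\ref{eq:b}), I would invoke the $\mathcal{O}(\ell^2)$ inversion algorithm for such matrices cited from \cite{Katehakis2014inverse} (with the alternative of \cite{heinig1984algebraic} when $\ell<\infty$). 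Combining the two steps gives the claimed $\mathcal{O}(\ell^2)$ bound; this part is essentially a bookkeeping argument built on an already-cited subroutine.

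For part (i), I would count the arithmetic in the LPCA formulas of Section~\ref{sec:lpc}. The matrix $\stR$ is upper-triangular and Toeplitz, so it is determined by its $M$ upper-diagonal entries $\str_0,\dots,\str_{M-1}$; by Eq.~(\ref{eq:lpc}) each $\str_h$ is an $\mathcal{O}(1)$ expression once $\kappa_h$ and $\kappa_{h-1}$ are known. The recursion for $\kappa_h$ in Eq.~(\ref{eq:G})'s display involves two convolution sums $\sum_{j=0}^{h-1} G_{h-j}\kappa_j$ and $\sum_{j=0}^{h-1} G_{h-j-1}\kappa_j$, each costing $\mathcal{O}(h)=\mathcal{O}(M)$ operations; summed over $h=0,\dots,M-1$ this contributes $\mathcal{O}(M^2)$, provided the constants $G_0,\dots,G_{M-1}$ are available. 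The bottleneck is therefore the computation of the $G_h$ in Eq.~(\ref{eq:G}). Each $G_h$ is an a priori infinite triple sum over $s,u,m$; the key point — and the one I would need to argue carefully — is that using the hypergeometric reformulation of \cite{leeuwaarden2009quasi} (their Eqs.~(26)--(27)), the inner infinite sum over $m$ collapses to a closed form evaluable in $\mathcal{O}(1)$, leaving a double sum over $0\le s\le h$, $0\le u\le h-s$ with $\mathcal{O}(h^2)=\mathcal{O}(M^2)$ terms. Computing all of $G_0,\dots,G_{M-1}$ then costs $\mathcal{O}(M^3)$, and I should double-check whether the binomial coefficients in $L_h(s,u,m)$ and the powers in $P_h(s,u,m)$ can be updated incrementally in $\mathcal{O}(1)$ per term — if instead each such term costs $\mathcal{O}(M)$ to evaluate from scratch, that raises the total for the $G_h$ to $\mathcal{O}(M^4)$, matching the stated bound. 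I would present the bound as $\mathcal{O}(M^4)$ to be safe, noting that the dominant cost is the evaluation of $\{G_h\}_{h<M}$ and that all subsequent steps (the $\kappa_h$ recursion and then Eq.~(\ref{eq:lpc})) are lower order.

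The main obstacle is the honest accounting of part (i): pinning down how many arithmetic operations the hypergeometric evaluation of a single term of $G_h$ actually takes, and how many terms there are after the infinite $m$-sum is resolved. Everything else — part (ii), the triangular-Toeplitz structure of $\stR$, the convolution cost of the $\kappa_h$ recursion — is routine once that one estimate is fixed. I would therefore spend the bulk of the proof making the per-$G_h$ cost precise, citing the relevant identities from \cite{leeuwaarden2009quasi}, and keep the rest brief.
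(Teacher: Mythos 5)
Your part (ii) is essentially the paper's own argument: the cost is dominated by inverting $B$, which the paper handles exactly as you do, via the $\mathcal{O}(\ell^2)$ inversion of the structured matrix of Eq.~(\ref{eq:b}) (first row and column of $C=B^{-1}$ at $\mathcal{O}(1)$ per element, the rest by solving a linear expression per element), followed by the sparse multiplication $R=UB^{-1}$ at $\mathcal{O}(\ell^2)$. No issue there.

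Part (i), however, has a genuine gap, and it sits exactly at the point you flag but do not resolve: the cost of evaluating the inner sum $\sum_{m=\max(u,s-1)}^{\infty}L_h(s,u,m)P_h(s,u,m)$ via the hypergeometric formulas of \cite{leeuwaarden2009quasi}. You leave this open, offering two scenarios ($\mathcal{O}(1)$ per collapsed inner sum, giving $\mathcal{O}(M^3)$ for all the $G_h$, versus $\mathcal{O}(M)$ per term, giving $\mathcal{O}(M^4)$) and then "present $\mathcal{O}(M^4)$ to be safe." That does not prove the statement in the sense the paper uses it: Theorem~\ref{th:cplx} is the basis of the comparison with \qbdpp, so the content of part (i) is precisely the determination of this per-evaluation cost, and deferring it leaves the theorem's conclusion undecided between $M^3$ and $M^4$. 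The paper resolves it by an explicit counting convention: it assigns complexity $\mathcal{O}(h)$ to each evaluation of the inner sum (justified conservatively by $s+u+t=h$, i.e.\ the exponents and binomial arguments in $P_h$ and $L_h$ grow linearly in $h$, so the hypergeometric evaluation cannot be treated as $\mathcal{O}(1)$), while noting that the true cost is in fact higher. With that assignment each $G_h$ costs about $(h^2/2)\cdot\mathcal{O}(h)=\mathcal{O}(h^3)$ from the double sum over $s$ and $u$, and summing over the $M$ distinct entries $\str_0,\ldots,\str_{M-1}$ gives $\mathcal{O}(M^4)$. Note also that the $\mathcal{O}(1)$-per-inner-sum scenario you entertain is exactly what this convention excludes, and your fallback accounting ($\mathcal{O}(M)$ per term of the double sum after collapsing the $m$-sum) is not the paper's either — the charge is $\mathcal{O}(h)$ per $(s,u)$ pair. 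Your observations that $\stR$ is triangular Toeplitz with $M$ distinct entries and that the $\kappa_h$ convolutions contribute only $\mathcal{O}(M^2)$ are correct and consistent with the paper, but without pinning down the per-$G_h$ cost the proof of part (i) is incomplete.
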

\begin{proof}
To prove part \emph{i}) we assign  complexity of  $\mathcal{O}(h)$ to the computation of the term  
$\sum_{m=\max(u,s-1)}^\infty L_h(s,u,m)P_h(s,u,m)$  that involves hypergeometric functions, 
cf. Eq.~(26) and Eq.~(27)  of \cite{leeuwaarden2009quasi},  noting  that $s+u+t=h$.
The \emph{correct} complexity of the above computation is actually higher, but this lower bound  is easy to establish when counting conservatively.
From \eqqref{eq:G} we see that to calculate  $G_h$ we need approximately 
 $(h^2/2) \mathcal{O}(h)=\mathcal{O}(h^3)$ 
iterations (a double summation). The computation of matrix $\stR$ (of size $M\times M$)  requires 
the computation of  all its $M$ different nonzero elements,  
$\str_{0},\ldots,\str_{M-1}$ and each of these computations is of complexity  $ \mathcal{O}(h^3)$. The complexity   
  of the computation of rate matrix $\stR$ is: 
 $\sum_{h=0}^{M-1}\mathcal{O}(h^3)=\mathcal{O}(M^4)$.

For part \emph{ii}), we will establish the complexity for the \qbdpp. The procedure for the computations of the elements of the first row and first column of $C$ uses a single computation per element, of $\mathcal{O}(1)$. For the remaining elements a linear expression has to be solved, having a complexity of $\mathcal{O}(1)$ per element as well. Thus the total complexity of 
computing $C$  is $\mathcal{O}(\ell^2)$, the number of elements of $B^{-1}$. The matrices $U$ have a sparse form (at most 3 non-zero elements per row), induced by  the fact that LPCA is applicable by assumption. Since $R=UB^{-1},$ the complexity of computing $R$ is $\mathcal{O}(\ell^2)$: both the complexity of the matrix multiplication $UB^{-1}$ and of the calculation of $B^{-1}$ have this complexity.
The proof is complete.
 \end{proof}

\begin{remark}
For some special cases, e.g.~the priority queue, the complexity of LPCA is lower because 
of the absence of transitions from $(n,j)$ to $(n+e_{1},j+e_{2})$ with $(e_1,e_2)\in \{\langle -1,1 \rangle ,  \langle 1,1 \rangle \}$ for all $(n,j)$.
In this special case the complexity of LPCA is $\mathcal{O}(M^2),$ because in the computation of $G_h$,  both $s=0$ and 
 $u=0$ and the summation in Eq.~(\ref{eq:G}) is only over  $m;$ i.e., the complexities of LPCA and QDESA are the same in this case.
 \end{remark}

\begin{remark}
When there is no additional structure on matrix $B,$ both \qbdp and \qbdpp can not be used, so we need a general matrix inversion to compute $B^{-1}$ of dimension $\ell$ by $\ell$ 
that is in complexity less than $\mathcal{O}(\ell^{2.379})$, cf.~\cite{williams2012multiplying}, when $\ell$ is finite. When $U$ is a non-sparse matrix this provides a solution procedure with total complexity $\mathcal{O}(l^{3})$ for QDESA.
\end{remark}

\section{The Applicability of QDESA to More General Models}\label{sec:app}
In this section we will determine the differences in applicability between QDESA and LPCA, and display these differences with examples. We will consider variations of the queues in Section~\ref{sec:priority} and \ref{sec:longest} that can be solved with QDESA but not with LPCA. 

One of the main advantages of QDESA over LPCA is that QDESA not only provides a method to find the rate matrix, but  the algorithm includes a way to find the steady state distribution using this rate matrix. Since LPCA does not require any restrictions on the non-homogenous part $\mathcal{K}_{0},$ the structure on this set can be very complex and a direct technique to do this step is absent and not trivial to include. Therefore QDESA can be viewed as a more complete solution procedure.
And for that reason we will not discuss models that have a complicated structure on $\mathcal{K}_{0}$; even though it is possible to find the rate matrix for such a model with LPCA, but perhaps not with QDESA, within the LPCA no procedure is provided to find the steady state distribution.  

There are four important classes of models for which (an extension of) QDESA is applicable and for which the 
LPCA can not be used at all.  The first class involves element non-homogenous DES processes: in this case there is no homogeneous tail on which the LPCA is applicable.  The second class involves 
processes with a finite number of stages $\ell$, as described in Section \ref{sec:def};  in the LPC case
there is analysis only for the case in which the number of stages $\ell$ is infinite.  The third class involves 
DES processes with  `down' transitions  to the entrance state in a level $L_{m-1}$  from more than one state in level $L_{m}$ for some $m$. 
The fourth and most general  class involve all DES processes, \ie Markov chains with 
 transitions   from an arbitrary state $(n,j)$  
to states: $(n+e_1,j+e_2)\in \ssp$ where $e_1\in\{0,1,\ldots\}$ and $e_2\in\{\ldots,-1,0,1,\ldots\}$, under the condition of a single entrance state in the `down' direction cf. \cite{sslqsf2015}.

Conversely, there are  processes for which the LPCA is applicable, but QDESA is not. Such processes will contain  transitions that destroy the DES property with respect to the level partition. For example transitions from a state $(n,1)$ to $(n-2,1)$  are allowed in an LPC Process, but are not allowed in  a DES process, when $(n,1)$ is the entrance state for every level $\mathcal{L}_{n}$. 
However, by relabeling and changing the levels one can construct a DES process in a lot of cases.

Table~\ref{table:LPCA-SLP} identifies the difference in applicability between the two procedures. We note that the transitions within the heterogenous stage $\mathcal{K}_{0}$ are not restricted, i.e.~matrix $B_{0}$ and $B_{1}$  are possibly non-sparse matrices in the LPCA procedure. We compare this with the restrictions that are imposed by QDESA.
\vspace{0.2cm}

\begin{table}[ht!]
{\small 
\begin{tabular*}{\textwidth}{@{\extracolsep{\fill}}|p{6cm}|p{6cm}|}
\hline
  \multicolumn{2}{|c|}{\textbf{Stage $\mathcal{K}_{0}$}, the Non-Homogeneous portion} \\
  \hline
  \textbf{LPCA}&\textbf{QDESA}\\
  \hline
  Within this stage all transitions allowed.&QSF Structure should be obeyed.\\
  Transitions leaving $\mathcal{K}_{0}$ allowed only to $\mathcal{K}_{1}$.&Transitions are allowed to all higher stages.\\
 Element Non-Homogeneous.&Element Non-Homogeneous.\\
 Sol.~Proc.~on $\mathcal{K}_{0}$ not included in algorithm.& Solution procedure included for all levels.\\
\hline
\multicolumn{2}{c}{\vspace{-0.3cm}}\\
\hline
  \multicolumn{2}{|c|}{\textbf{Stage $\mathcal{K}_{i}$} from the Homogeneous portion} \\
  \hline
   \textbf{LPCA}&\textbf{QDESA}\\
  \hline
  Nearest Neighbor structure within levels.&All transitions allowed within levels.\\
Nearest Neighbor to `NE', `E', `SE'.&All transitions allowed to higher levels.\\
Element \emph{Homogeneous}.&Element Non-Homogeneous.\\
No transitions to `NW', `W', `SW' allowed.&Trans.~to `W' allowed to \emph{entrance} state.\\
Number of stages must be infinite.&Number of stages can be finite or infinite.\\
\hline
\end{tabular*}
\caption{Restrictions   for the applicability of LPCA  and QDESA.}\label{table:LPCA-SLP}
}
\end{table}

\subsection{The Priority Queue with Batch Arrivals}\label{sec:priority2}
Consider  the priority queue model where two types  of customers arrive in  batches according to independent Poisson processes  with rate $\lambda_i$ for   queue $i$, $i=1,2$. Upon arrival the size $Z_i$ of a batch of type $i$  becomes known. For each fixed $i$ 
 the $Z_i$ are iid random variables that follow a known discrete distribution: 
  $P(Z_i=z)=p_i(z).$  

There is a single  server that serves at exponential  rate $\mu,$ independent of the arrival processes. The server serves customers at   queue 2 only when queue 1  is empty, preemptions are allowed and switches are instantaneous. 
Under these assumptions the  state of the system can be summarized by a tuple $(n, j)$ where  $n$  (respectively $j$)  is the number of customers in queue 2 (respectively  in queue 1). Because we assume that there is no maximum for number of customers in  queue 1   the sub-matrices of $Q$ have infinite dimension. It is easy to see that $Q$ is the transition rate matrix of  a successively lumpable process with respect to the levels with $M_1=0,$ $M_2=\infty$ and the following within- and up-matrices, where 
$d=(\lambda_1+\lambda_2+\mu)$:
\small
$$
 W =\left[\begin{array}{cccc} -d&\lambda_1p_1(1)&\lambda_1p_1(2)&\cdots\\ \mu&-d&\lambda_1p_1(1)&\ddots\\ 0&\mu&-d&\ddots\\\vdots&\ddots&\ddots&\ddots\end{array}\right],
 \quad
 U^{nk} =\left[\begin{array}{cccc} \lambda_2p_2(k)&0&0&\cdots\\0&\lambda_2p_2(k)&0&\ddots\\0&0&\lambda_2p_2(k)&\ddots\\\vdots&\ddots&\ddots&\ddots\end{array}\right].
$$ 

The matrix $W^{0}$ has its $(1,1)$ element equal to $-(\lambda_1+\lambda_2)$ and all its other elements are 
the same as those of $W$.  The matrix $D$ is the same as that of the process described 
in Section 3.1.
This model can be solved using QDESA, but LPCA is not applicable.

\subsection{Longest Queue Model with non-homogeneous arrival rates}\label{sec:longest2}
We will extend the model discussed in Section \ref{sec:longest} in such a way that  now two types of  customers arrive according to
 independent Poisson   streams, with rate $\lambda_{1}$ and $\lambda_{2}$. There is  a single   exponential server with rate $\mu> \lambda_{1}+\lambda_{2}$.  Note that the fact that the arrivals have a different rate implies that the state space description used in Section~\ref{sec:longest}  does not induce a Markov chain.
Therefore, we now let the state be specified by a tuple $(n,j)$ where $j$ denotes the number of customers in queue 1 and $n$ the number of customers in queue 2. The buffers are of size $M$ and $\ell$ respectively and can be either finite of infinite.
The transition diagram is displayed in Figure~\ref{fig:longestqueue2} and the level partition is highlighted by the grey background.
It is easy to deduct that this  is a DES process where the level sets $\mathcal{L}$ are formally described as follows: 
$$\mathcal{L}_{m}=\bigcup_{n=m}^{M}\{(n,m-1)\}\cup\bigcup_{i=m}^{\ell}\{(m-1,\ell)\}\cup\{(m,m)\}.$$
State $(m,m)$ is the entrance states for the set $\bigcup^{m}_{k=0} \mathcal{L}_{k}.$
With this different arrival rates, LPCA can not be used, while   \qbdp \ can be used. Note that the rate matrix $R_{m}$ depends on the level $m$.

\begin{figure}[ht!]
\begin{center}
\includegraphics[clip=true, trim=100 420 100 100,scale=0.67]{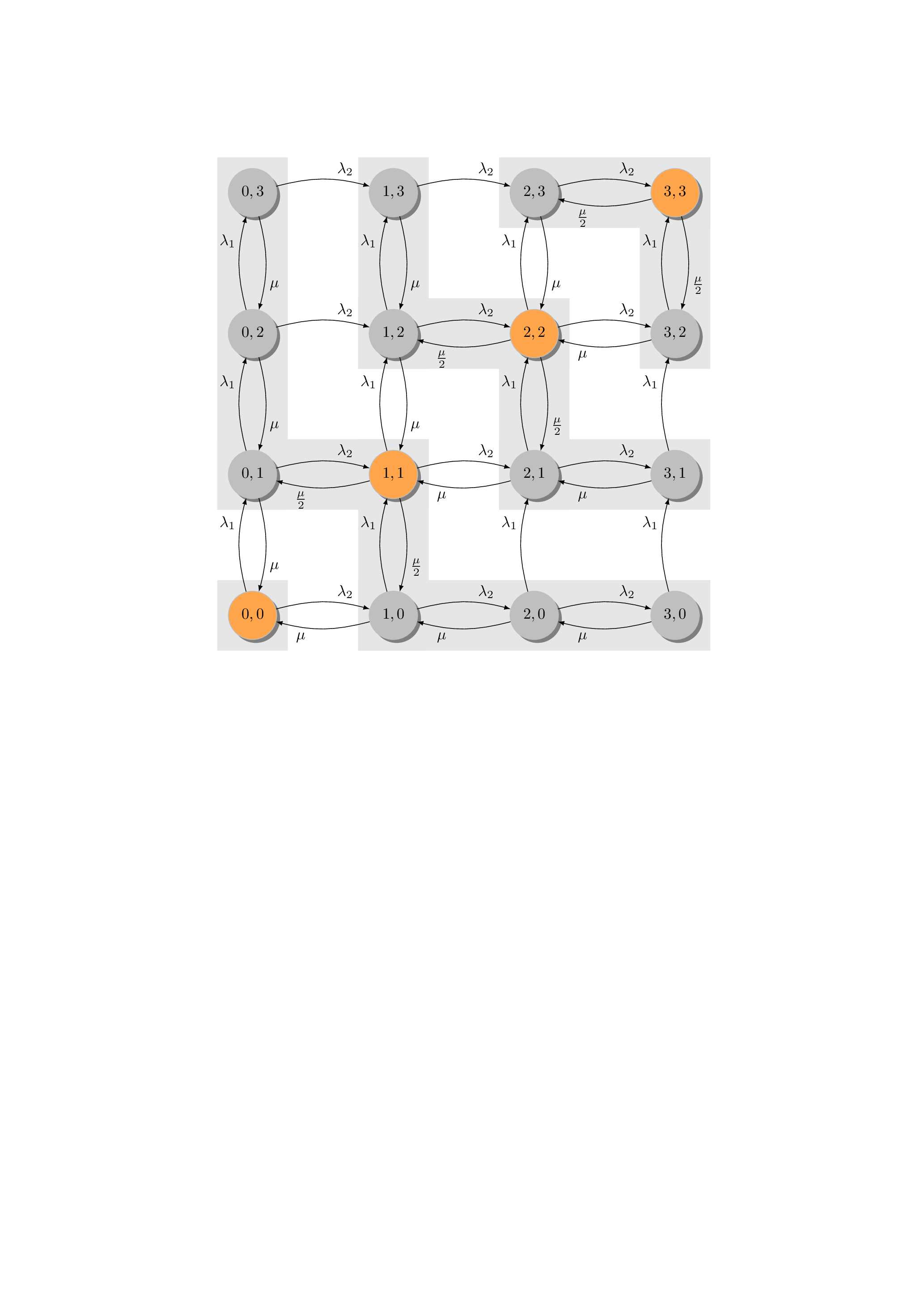}
\caption{Longest Queue model.}\label{fig:longestqueue2}
 \end{center}
 \end{figure}

\acks
This Research has been partially supported by the 
National Science Foundation with grant  CMMI-14-50743.

\bibliographystyle{apt} 
\bibliography{LPCref}

\begin{thebibliography}{10}

\bibitem{adan2009synchronized}
{\sc Adan, I., Economou, A. and Kapodistria, S.} (2009).
\newblock Synchronized reneging in queueing systems with vacations.
\newblock {\em Queueing Systems\/} {\bf 62,} 1--33.

\bibitem{adan2015shorter}
{\sc Adan, I.~J., Boxma, O.~J., Kapodistria, S. and Kulkarni, V.~G.} (2015).
\newblock The shorter queue polling model.
\newblock {\em Annals of Operations Research\/} to appear.

\bibitem{adan2013erlang}
{\sc Adan, I.~J., Kapodistria, S. and van Leeuwaarden, J.~S.} (2013).
\newblock Erlang arrivals joining the shorter queue.
\newblock {\em Queueing Systems\/} {\bf 74,} 273--302.

\bibitem{bmsv06}
{\sc Bini, D., Meini, B., Steff{\'e}, S. and Van~Houdt, B.} (2006).
\newblock Structured markov chains solver: software tools.
\newblock In {\em Proceeding from the 2006 workshop on Tools for solving
  structured Markov chains}.
\newblock ACM.
\newblock Pisa, Italy.
\newblock p.~14.

\bibitem{bohm1997combinatorics}
{\sc B{\"o}hm, W., Krinik, A. and Mohanty, S.} (1997).
\newblock The combinatorics of birth-death processes and applications to
  queues.
\newblock {\em Queueing Systems\/} {\bf 26,} 255--267.

\bibitem{bright1995calculating}
{\sc Bright, L. and Taylor, P.} (1995).
\newblock Calculating the equilibrium distribution in level dependent
  quasi-birth-and-death processes.
\newblock {\em Stochastic Models\/} {\bf 11,} 497--525.

\bibitem{Armin2003}
{\sc Eisenbl{\"a}tter, A., Wess{\"a}ly, R., Martin, A., F{\"u}genschuh, A.,
  Wegel, O., Koch, T., Achterberg, T. and Koster, A.} (2003).
\newblock Modelling feasible network configurations for {UMTS}.
\newblock In {\em Telecommunications Network Design and Management}.
\newblock Springer, United States.

\bibitem{Katehakis2014LevelProductForm}
{\sc Ertiningsih, D., Katehakis, M., Smit, L. and Spieksma, F.} (2015).
\newblock {QSF} processes with level product form stationary distributions.
\newblock {\em Under review at Naval Research Logistics\/}.

\bibitem{etessami2010quasi}
{\sc Etessami, K., Wojtczak, D. and Yannakakis, M.} (2010).
\newblock Quasi-birth--death processes, tree-like {QBD}s, probabilistic
  1-counter automata, and pushdown systems.
\newblock {\em Performance Evaluation\/} {\bf 67,} 837--857.

\bibitem{flajolet2000formal}
{\sc Flajolet, P. and Guillemin, F.} (2000).
\newblock The formal theory of birth-and-death processes, lattice path
  combinatorics and continued fractions.
\newblock {\em Advances in Applied Probability\/} {\bf 32,} 750--778.

\bibitem{gillent1983semi}
{\sc Gillent, F. and Latouche, G.} (1983).
\newblock Semi-explicit solutions for {M/PH/1}-like queuing systems.
\newblock {\em European journal of operational research\/} {\bf 13,} 151--160.

\bibitem{hager1989updating}
{\sc Hager, W.} (1989).
\newblock Updating the inverse of a matrix.
\newblock {\em SIAM review\/} {\bf 31,} 221--239.

\bibitem{heinig1984algebraic}
{\sc Heinig, G. and Rost, K.} (1984).
\newblock {\em Algebraic methods for {T}oeplitz-like matrices and operators}.
\newblock Springer, Basel, Switserland.

\bibitem{ssl2012}
{\sc Katehakis, M. and Smit, L.} (2012).
\newblock A successive lumping procedure for a class of {M}arkov chains.
\newblock {\em Probability in the Engineering and Informational Sciences\/}
  {\bf 26,} 483--508.

\bibitem{Katehakis2014inverse}
{\sc Katehakis, M., Smit, L. and Spieksma, F.} (2014).
\newblock A solution to a countable system of equations arising in stochastic
  processes.
\newblock {\em Under review\/}.

\bibitem{m1989}
{\sc Katehakis, M.~N. and Derman, C.} (1989).
\newblock On the maintenance of systems composed of highly reliable components.
\newblock {\em Management Science\/} {\bf 35,} 551--560.

\bibitem{dra1988}
{\sc Katehakis, M.~N. and Melolidakis, C.} (1988).
\newblock Dynamic repair allocation for a {K} out of {N} system maintained by
  distinguishable repairmen.
\newblock {\em Probability in the Engineering and Informational Sciences\/}
  {\bf 2,} 51--62.

\bibitem{qr2012}
{\sc Katehakis, M.~N. and Smit, L.~C.} (2012).
\newblock On computing optimal (q, r) replenishment policies under quantity
  discounts.
\newblock {\em Annals of Operations Research\/} {\bf 200,} 279--298.

\bibitem{sslqsf2015}
{\sc Katehakis, M.~N., Smit, L.~C. and Spieksma, F.~M.} (2015).
\newblock {DES} and {RES} processes and their explicit solutions.
\newblock {\em Probability in the Engineering and Informational Sciences\/}
  {\bf FirstView,} 1--27.

\bibitem{latouche1999introduction}
{\sc Latouche, G. and Ramaswami, V.} (1999).
\newblock {\em Introduction to matrix analytic methods in stochastic modeling}
  vol.~5.
\newblock SIAM, Philadelphia, PA.

\bibitem{leeuwaarden2009quasi}
{\sc {Leeuwaarden, J. van}, Squillante, M. and Winands, E.} (2009).
\newblock Quasi-birth-and-death processes, lattice path counting, and
  hypergeometric functions.
\newblock {\em Journal of Applied Probability\/} {\bf 46,} 507--520.

\bibitem{van2006quasi}
{\sc {Leeuwaarden, J. van} and Winands, E.} (2006).
\newblock Quasi-birth-and-death processes with an explicit rate matrix.
\newblock {\em Stochastic models\/} {\bf 22,} 77--98.

\bibitem{liu1996determination}
{\sc Liu, D. and Zhao, Y.} (1996).
\newblock Determination of explicit solutions for a general class of {M}arkov
  processes.
\newblock {\em Matrix-Analytic Methods in Stochastic Models\/} 343--358.

\bibitem{mohanty1979lattice}
{\sc Mohanty, S.} (1979).
\newblock {\em Lattice path counting and applications}.
\newblock Academic Press, New York, NY.

\bibitem{mohanty1990discrete}
{\sc Mohanty, S. and Panny, W.} (1990).
\newblock A discrete-time analogue of the {M/M/1} queue and the transient
  solution: A geometric approach.
\newblock {\em Sankhy{\=a}: The Indian Journal of Statistics, Series A\/}
  364--370.

\bibitem{neuts80}
{\sc Neuts, M.} (1981).
\newblock {\em Matrix-geometric solutions in stochastic models}.
\newblock The Johns Hopkins University Press, Baltimore, MD.

\bibitem{Perros94}
{\sc Perros, H.} (1994).
\newblock {\em Queueing Networks with Blocking}.
\newblock Oxford University Press, New York, NY.

\bibitem{spitzer2001principles}
{\sc Spitzer, F.} (2001).
\newblock {\em Principles of random walk} vol.~34.
\newblock Springer Verlag, New York, NY.

\bibitem{L-2011admission}
{\sc Ulukus, M.~Y., G{\"u}ll{\"u}, R. and {\"O}rmeci, L.} (2011).
\newblock Admission and termination control of a two class loss system.
\newblock {\em Stochastic Models\/} {\bf 27,} 2--25.

\bibitem{van2011triangular}
{\sc Van~Houdt, B. and {Leeuwaarden, J. van}} (2011).
\newblock Triangular {M/G/1}-type and tree-like quasi-birth-death {M}arkov
  chains.
\newblock {\em INFORMS Journal on Computing\/} {\bf 23,} 165--171.

\bibitem{vlasiou2014insensitivity}
{\sc Vlasiou, M., Zhang, J. and Zwart, B.} (2014).
\newblock Insensitivity of proportional fairness in critically loaded bandwidth
  sharing networks.
\newblock {\em arXiv preprint arXiv:1411.4841\/}.

\bibitem{williams2012multiplying}
{\sc Williams, V.} (2012).
\newblock Multiplying matrices faster than {C}oppersmith-{W}inograd.
\newblock In {\em Proceedings of the 44th symposium on Theory of Computing}.
\newblock ACM.
\newblock New York, NY.
\newblock pp.~887--898.

\bibitem{woodbury1950inverting}
{\sc Woodbury, M.} (1950).
\newblock Inverting modified matrices.
\newblock {\em Memorandum report\/} {\bf 42,} 106.

\bibitem{zhao1995queueing}
{\sc Zhao, Y. and Grassmann, W.} (1995).
\newblock Queueing analysis of a jockeying model.
\newblock {\em Operations research\/} {\bf 43,} 520--529.

\bibitem{zheng1990queueing}
{\sc Zheng, Y.-S. and Zipkin, P.} (1990).
\newblock A queueing model to analyze the value of centralized inventory
  information.
\newblock {\em Operations Research\/} {\bf 38,} 296--307.

\end{thebibliography}

\end{document}